\title{Gaussian elimination corrects pivoting mistakes}
\author{Alex Townsend\thanks{Department 
of Mathematics, Massachusetts Institute of Technology, 77 Massachusetts Avenue
Cambridge, MA 02139-4307. (\texttt{ajt@mit.edu})}}
\date{}
\begin{document}
\maketitle

\begin{abstract} 
Gaussian elimination (GE) is the archetypal direct algorithm for solving 
linear systems of equations and this has been its primary application for thousands of years.  
In the last decade, GE has found another major use as an iterative algorithm for low rank approximation. 
In this setting, GE is often employed with complete pivoting and designed to allow for non-optimal pivoting, i.e., pivoting mistakes, 
that could render GE numerically unstable when implemented in floating point arithmetic. While
it may appear that pivoting mistakes could accumulate and lead to a large growth factor, we show that later GE steps
correct earlier pivoting mistakes, even while more are being made. In short, GE is very robust 
to non-optimal pivots, allowing for its iterative variant to flourish. 
\end{abstract}

\begin{keywords}
Gaussian elimination, complete pivoting, growth factor, iterative, low rank
\end{keywords} 

\begin{AMS}
65F10, 65F30 
\end{AMS}

\section{Introduction} 
Gaussian elimination (GE) is the archetypal direct algorithm
for solving linear systems of equations~\cite{Golub_12_01}. 
For an $n\times n$ invertible matrix $A$, GE performs a total of $n$ steps to 
solve $Ax=b$, where each step requires a nonzero entry to be selected as a pivot. After 
$n$ GE steps on $A$, the solution to $Ax=b$ can be calculated by forward and back
substitution~\cite{Golub_12_01}.  This is the elimination procedure that any reader who 
has taken an introductory linear algebra course is familiar with.  

In the last decade, an iterative variant of GE has become popular for low rank approximation 
as a substitute for the computationally expensive singular value decomposition~\cite{Bebendorf_00_01,Pan_00_01,Townsend_13_01}. 
As an iterative algorithm, $k\leq n$ GE steps on $A$ are performed 
to calculate a rank $k$ approximation to $A$. In this setting it is beneficial if $A$ is low rank,
rectangular, or possesses rapidly decaying singular values as then GE will typically terminate 
after a handful of steps. 

Each step of GE in ``iterative mode" is mathematically equivalent to the familiar direct algorithm, though it is described slightly differently. 
Let $A$ be a nonzero $m\times n$ matrix and set $A^{(0)}=A$. First, GE selects a nonzero entry 
of $A^{(0)}$ called a {\em pivot}, say the $A_{i_1,j_1}^{(0)}$ entry, and uses row $i_1$ to 
eliminate column $j_1$, i.e., 
\begin{equation}
 A^{(1)} = A^{(0)} - \underbrace{A_{:,j_1}^{(0)}A_{i_1,:}^{(0)}\Big/A_{i_1,j_1}^{(0)}}_{\text{Rank $1$ matrix}},
\label{eq:GEFirstStep}
\end{equation}
where $A_{:,j_1}$ and $A_{i_1,:}$ denotes column $j_1$ and row $i_1$ of $A$, respectively.
Row $i_1$ and column $j_1$ of the matrix $A^{(1)}$ are zero, and  
the matrix $A_{:,j_1}^{(0)}A_{i_1,:}^{(0)}/A_{i_1,j_1}^{(0)}$ is of rank $1$ because
it is the outerproduct of a column and row vector.  Depending on the choice of the pivot, the matrix 
$A_{:,j_1}^{(0)}A_{i_1,:}^{(0)}\Big/A_{i_1,j_1}^{(0)}$ can be a near-best rank $1$ 
approximation to $A$~\cite{Foster_06_01,Townsend_13_01}. 

If $A^{(1)}$ is the zero matrix, then the GE procedure is terminated; otherwise, 
a nonzero entry of $A^{(1)}$ is selected, say the $A_{i_2,j_2}^{(1)}$ entry,
and a second GE step is performed, i.e., 
\begin{equation} 
  A^{(2)} = A^{(1)} - \underbrace{A_{:,j_2}^{(1)}A_{i_2,:}^{(1)}\Big/A_{i_2,j_2}^{(1)}}_{\text{Rank $1$ matrix}}. 
\label{eq:GESecondStep}
\end{equation}
Now, the matrix $A^{(2)}$ is zero in row $i_1$ and $i_2$ as well as column $j_1$ 
and $j_2$. The two rank $1$ matrices from~\eqref{eq:GEFirstStep} and~\eqref{eq:GESecondStep}
can be added together to form a rank $2$ approximation to $A$.  
The GE procedure is terminated if $A^{(2)}$ is the zero matrix; otherwise,
a nonzero entry of $A^{(2)}$ is selected followed by another GE step. 
In principle, GE continues for $k$ steps until the matrix 
$A^{(k)}$ is zero or its entries are considered to be sufficiently small in magnitude.

By the Wedderburn--Guttman Theorem~\cite{Guttman_44_01}, each GE step reduces the rank of $A$ by precisely 
one and hence, GE is guaranteed to terminate after at most $\min(m,n)$ steps. The rank $1$ matrices from the first $k$ GE steps can be
accumulated to form a rank $k$ approximation to $A$.  Other direct algorithms can also be used in an 
iterative manner to construct low rank approximations~\cite{Gu_96_01}. 

The strategy for selecting GE pivots is very important as it alters the numerical stability and computational 
efficiency of the algorithm as well as the near-optimality of the constructed low rank approximations~\cite{Foster_06_01}. 
There are many pivoting strategies such as (in order of computational cost)
partial pivoting (pivot is the absolute maximum entry in a column), rook pivoting
(pivot is the absolute maximum entry in its column and row), and 
complete pivoting (pivot is the absolute maximum entry in the matrix).  
Unfortunately, partial pivoting is not suitable for GE in iterative mode~\cite{Foster_06_01}, though it is 
the standard choice when solving linear systems of equations.
Instead, complete or rook pivoting is often employed~\cite{Bebendorf_00_01,Foster_06_01,Townsend_13_01}.  
We will focus on complete pivoting. 

Complete pivoting is prohibitively expensive because at each step the whole matrix must be searched to find the absolute 
maximum entry.  For an $m\times n$ matrix, this costs $\mathcal{O}(mn)$ operations and 
a full search must be repeated before each GE step.  In many applications this is too costly as $\min(m,n)$ is in the thousands, 
and the matrix entries may be computed on-demand as opposed to stored~\cite{Bebendorf_00_01}.  
Instead, it is common to only find an entry with a sufficiently large absolute value by searching a small proportion of the matrix.  
A partial search of the matrix may happen to find the absolute 
maximum entry of $A$ for the pivot, but often it will not. We regard such a 
pivoting strategy as complete pivoting with mistakes. These pivoting mistakes could render the
GE procedure numerically unstable and useless when performed in floating point arithmetic. 
Here, we show that later GE steps seem to correct earlier pivoting mistakes to allow the iterative variant 
of GE to be used numerically. It is currently being employed with various algorithmic details in: (1) hierarchical matrix 
compression, where it is called adaptive cross approximation~\cite{Bebendorf_00_01,Bebendorf_08_01}, (2) low rank function 
approximation, where it is sometimes refered to as Geddes--Newton approximation~\cite{Geddes_08_01,Townsend_13_02}, 
and (3) randomized techniques, where it computes a two-sided interpolative decomposition with relaxed 
constraints~\cite{Halko_11_01}. 

\section{Growth factors in Gaussian elimination}
An important quantity for the analysis of the GE procedure is the growth factor~\cite{Higham_89_01,Higham_02_01}, 
denoted by $\rho(A)$, which is defined as the maximum relative amplification of the matrix entries during the GE procedure. Since we 
are interested in the iterative analogue, we also define the term {\em intermediate growth factor}. 
\begin{definition}[Intermediate growth factor]
For an $m\times n$ matrix $A$, the intermediate growth factor $\rho_k(A)$ for GE step $k$ with $1\leq k\leq \min(m,n)$ is given by the 
ratio between the absolute maximum entry of $A^{(k)}$ and the original matrix $A$. That is, 
\[
\rho_k(A) = \max_{1\leq i\leq m}\max_{1\leq j\leq n} \left|A_{ij}^{(k)}\right| \Bigg/  \max_{1\leq i\leq m}\max_{1\leq j\leq n} \left|A_{ij}\right|.
\]
For complete pivoting, the growth factor is given by $\rho(A) = \max_{0\leq k\leq \min(m,n)-1} \rho_k(A)$. 
\end{definition} 

Given a pivoting strategy, if the growth factor $\rho(A)$ is very large then GE with that 
pivoting strategy is not a backward stable algorithm, see~\cite[Thm.~9.5]{Higham_02_01}.
The literature regarding the growth factor for different pivoting strategies is summarized in~\cite[Sec.~4.3]{Golub_12_01}. 
There are many interesting practical nuances regarding GE, its numerical stability, and 
the growth factor. See, for example,~\cite{Driscoll_07_01,Trefethen_90_01}.


For GE with complete pivoting, Wilkinson showed that $\rho(A)\leq 2\sqrt{n} n^{\tfrac{1}{4}\log n}$ 
for any $n\times n$ matrix $A$~\cite{Wilkinson_61_01}. Since $\rho(A)$ grows slowly with $n$, the direct algorithm is regarded 
as backwards stable.  However, for the iterative variant pivoting mistakes are
allowed and one might expect that this causes the intermediate growth factors to rapidly grow.  In Theorem~\ref{thm:GEmistakes} 
we show that this does not happen. 

If $A_{i_kj_k}^{(k-1)}$ is the pivot for step $1\leq k\leq \min(m,n)$, then we can quantify the 
quality of this pivot by using the value 
\[
\beta_k  = \left| A_{i_kj_k}^{(k-1)} \right|\bigg/\!\! \left( \max_{1\leq i\leq m}\max_{1\leq j\leq n} \left|A_{ij}^{(k-1)}\right|\right), \qquad \left\|A_{ij}^{(k-1)}\right\|\neq 0,
\]
where $\|\cdot\|$ denotes a matrix norm. 
When $\beta_k =1$ the $k$th pivot is the absolute maximum entry of $A^{(k-1)}$ and no mistake was made. 
When $\beta_k<1$ a pivoting mistake occurred and the severity of the mistake is inversely proportional to 
$\beta_k$. Note that $\beta_k = 0$ is impossible since then the pivoting entry must be nonzero for the GE 
step to be defined.

\section{Gaussian elimination with complete pivoting and mistakes}\label{sec:GEmistakes}
Let $A$ be a nonzero $m\times n$ matrix and $A^{(0)}=A$.  
Suppose that $k$ steps of the iterative variant 
of GE have been performed on $A$ with pivot qualities $\beta_1,\ldots,\beta_k$. Label 
the intermediate matrices during the GE procedure by $A^{(1)},\ldots,A^{(k)}$, 
see~\eqref{eq:GEFirstStep} and~\eqref{eq:GESecondStep} for $A^{(1)}$ and $A^{(2)}$. We 
can bound the intermediate growth factors as follows.

\begin{theorem} 
Let $A$ be a nonzero $m\times n$ matrix and suppose that $1\leq k\leq \min(m,n)$ GE steps are performed on $A$ with 
pivot quality $0<\beta_1,\ldots,\beta_k\leq 1$. Then, the intermediate growth factor after $k$ GE steps on $A$ is
bounded by 
\[
\rho_k(A) \leq 2(\beta_k+\beta_k^{-1})\left(\beta_1^2\prod_{r=2}^{k-1}\beta_{r}^{\tfrac{1}{k-r}}\right)^{-1} \sqrt{k} k^{\tfrac{1}{4}\log k},\qquad 1\leq k\leq \min(m,n).
\]
\label{thm:GEmistakes} 
\end{theorem}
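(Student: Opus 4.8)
The plan is to combine two classical facts about a single batch of Gaussian elimination steps: the Schur-complement (quotient) formula, which writes every entry of an intermediate matrix as a ratio of minors, and Hadamard's determinant inequality. Write $m_k=\max_{i,j}|A^{(k)}_{ij}|$, so that $\rho_k(A)=m_k/m_0$ and the pivot chosen at step $s$ has magnitude $\beta_s m_{s-1}$. First I would record, for each starting level $0\le\ell\le k-1$, the identity
\[
A^{(k)}_{ij}=\frac{\det A^{(\ell)}[\,\{i_{\ell+1},\dots,i_k,i\},\{j_{\ell+1},\dots,j_k,j\}\,]}{\det A^{(\ell)}[\,\{i_{\ell+1},\dots,i_k\},\{j_{\ell+1},\dots,j_k\}\,]},
\]
valid because steps $\ell+1,\dots,k$ applied to $A^{(\ell)}$ produce $A^{(k)}$. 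The denominator equals the product of the intervening pivots, $\prod_{s=\ell+1}^{k}\beta_s m_{s-1}$, while Hadamard applied to the $(k-\ell+1)\times(k-\ell+1)$ numerator (whose entries are at most $m_\ell$) yields the family of scalar inequalities
\[
m_k\le\frac{(k-\ell+1)^{(k-\ell+1)/2}\,m_\ell^{\,k-\ell+1}}{\big(\prod_{s=\ell+1}^{k}\beta_s\big)\,m_\ell\,m_{\ell+1}\cdots m_{k-1}},\qquad 0\le\ell\le k-1 .
\]

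Next I would peel off the final step with the sharp one-step estimate $m_k\le(1+\beta_k^{-1})m_{k-1}$ (immediate from the rank-one update, since the two off-pivot factors are each at most $m_{k-1}$); this accounts for the prefactor $2(\beta_k+\beta_k^{-1})$ after using $1+\beta_k^{-1}\le 2(\beta_k+\beta_k^{-1})$ for $\beta_k\le1$, and it then suffices to bound $\rho_{k-1}(A)=m_{k-1}/m_0$. Taking logarithms in the displayed family (for the $(k-1)$-step process) turns each inequality into a linear relation among $\log m_0,\dots,\log m_{k-1}$, and the crux is to form a \emph{nonnegative} combination of these relations whose weights cancel every intermediate $\log m_\ell$ and leave exactly $\log(m_{k-1}/m_0)$ on the left. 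The weights are forced: solving the resulting triangular recursion gives $w_\ell=1/\big((k-1-\ell)(k-\ell)\big)$ for $\ell\ge1$ and $w_0=1/(k-1)$, whose partial sums telescope to $W_{s-1}=1/(k-s)$. This is precisely what deposits the factor $\beta_s^{-1/(k-s)}$ onto each pivot quality, producing the product $\prod_{r=2}^{k-1}\beta_r^{-1/(k-r)}$; the boundary index $s=1$ receives weight $1/(k-1)\le2$, which I would bound crudely using $\beta_1^{-1/(k-1)}\le\beta_1^{-2}$ to obtain the stated $\beta_1^{2}$ term.

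Finally, the same weighted combination collects the size factors into $\exp\!\big(\tfrac12\sum_{N}\tfrac{\log(N+1)}{N}+\text{boundary}\big)$, and I would estimate $\sum_{N}\tfrac{\log(N+1)}{N}\le\tfrac12(\log k)^2+O(\log k)$ by comparison with $\int \tfrac{\log t}{t}\,dt$, reproducing Wilkinson's $\sqrt{k}\,k^{\frac14\log k}$ up to the constant that the leading factor absorbs. I expect the main obstacle to be exactly this bookkeeping: choosing the combination so that all intermediate maxima $m_\ell$ cancel while each $\beta_r$ lands with the correct exponent $1/(k-r)$, and simultaneously keeping the size estimate sharp enough to land the Wilkinson factor with the stated constant. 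A secondary subtlety is that the maximizing pair $(i,j)$ in the minor-ratio identity may differ from level to level; this is harmless because $m_k$ is a single fixed scalar, so each level-$\ell$ bound is an honest scalar inequality and the linear combination is legitimate.
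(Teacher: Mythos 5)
Your proposal is correct in its essential structure, and it is recognizably the same Wilkinson-style argument as the paper's: both use the fact that the minor on pivot rows and columns equals (up to sign) the product of the pivots, both apply Hadamard's inequality, both pass to logarithms, and both take \emph{exactly} the same nonnegative combination --- weights $1/\bigl((k-\ell)(k-\ell-1)\bigr)$ with boundary weight $1/(k-1)$, whose partial sums telescope to $1/(k-s)$ and thereby deposit the exponent $1/(k-s)$ on each $\beta_s$ --- and both peel off the final step to produce the $(\beta_k+\beta_k^{-1})$ prefactor. The genuine difference is \emph{where} the argument is run. The paper forms the $k\times k$ submatrix $B$ of pivot rows and columns, applies Hadamard to its reduced matrices $B^{(r)}$ (entries bounded by $p_{r+1}/\beta_{r+1}$), bounds the pivot ratio $p_k/p_1$, and only at the end converts this into a statement about $\rho_{k-1}(A)$ via the claim $p_k/p_1\geq \beta_k^{-1}\rho_{k-1}(A)$; that conversion is in fact the weakest step of the paper, since $p_k=\beta_k m_{k-1}$ and $p_1=\beta_1 m_0$ give $p_k/p_1=(\beta_k/\beta_1)\,\rho_{k-1}(A)$, not the stated inequality. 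You instead run the argument directly on the maxima $m_\ell$, using the Schur-complement quotient formula with the maximizing entry appended to the pivot minor. This removes the conversion step entirely, and it also yields $\beta_1^{-1/(k-1)}$ in place of the paper's $\beta_1^{-2}$ and the sharp one-step factor $1+\beta_k^{-1}$ rather than the paper's $1+\beta_k^{-2}$: a cleaner derivation of a slightly stronger bound. (One trivial point you leave implicit: the maximizing $(i,j)$ lies outside the pivot rows and columns, since $A^{(k)}$ vanishes there; if $A^{(k)}=0$ the bound is vacuous.)

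Two bookkeeping points would need tightening before your sketch lands the constant $2$ exactly as stated. First, you spend the available factor of $2$ twice: once in $1+\beta_k^{-1}\leq 2(\beta_k+\beta_k^{-1})$ and once in Foster's bound $f(k)\leq 2k^{\frac14\log k}$ for $f(k)=\bigl(\prod_{r=2}^{k}r^{1/(r-1)}\bigr)^{1/2}$; since $\sup_{0<\beta\leq 1}(1+\beta^{-1})/(\beta+\beta^{-1})=(1+\sqrt{2})/2$, the chain as written delivers roughly $(1+\sqrt{2})(\beta_k+\beta_k^{-1})$ in place of $2(\beta_k+\beta_k^{-1})$. Second, your estimate $\sum_{N}\log(N+1)/N\leq\tfrac12(\log k)^2+O(\log k)$ is too weak as written: an additive $O(\log k)$ error exponentiates to a spurious $k^{O(1)}$ factor. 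What the integral comparison actually gives (the termwise errors are summable) is $\tfrac12(\log k)^2+O(1)$, i.e. $f(k)\leq Ck^{\frac14\log k}$ with an absolute constant, which is what the paper imports from Foster. Both repairs are at the level of absolute constants, not structure; your dependence on $k$ and on every $\beta_r$ is correct and indeed slightly stronger than the theorem's --- and, given that the paper's own final conversion step contains the error noted above, your variant is arguably the more defensible route to the stated result.
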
 
\begin{proof} 
Fix $1\leq k\leq \min(m,n)$ and suppose that GE selects pivots at the entries $(i_1,j_1),\ldots,(i_k,i_k)$. 
Take the corresponding $k\times k$ submatrix of pivoting entries given by 
\[
B_{st} = A_{i_sj_t}, \qquad 1\leq s,t\leq k 
\]
and, for $0\leq r\leq k-1$, let $B^{(r)}$ be the reduced matrices defined by 
\[
(B^{(r)})_{st} = A_{i_sj_t}^{(r)}, \qquad r+1\leq s,t\leq k. 
\] 
We now bound the intermediate growth factors of $B$, before relating the bound to the intermediate growth factors of $A$.  The proof is a generalization of an argument used by Wilkinson~\cite{Wilkinson_61_01}. 

If $p_1,\ldots,p_k$ denote the absolute values of the pivoting entries, i.e., $p_r = |B^{(r)}_{rr}|$, then  
\begin{equation}
 \left|\det( B^{(r)} )\right| = \prod_{i=r+1}^k p_i,\qquad 0\leq r \leq k-1.
\label{eq:4.2}
\end{equation} 
where we used the fact that the determinant of a matrix is equal to plus or minus the product of the GE pivots.

On the other hand, by Hadamard's inequality on the determinant we know that  
\begin{equation} 
 \left|\det( B^{(r)} )\right| \leq ( (k-r)^{1/2} \beta_{r+1}^{-1} p_{r+1}  )^{k-r},\qquad 0\leq r\leq k-1,
\label{eq:4.3}
\end{equation} 
where we used the fact that each entry of $B^{(r)}$ is bounded above by $p_{r+1}/\beta_{r+1}$ and therefore, the 
2-norm of a column of $B^{(r)}$ is bounded above by $(k-r)^{1/2}\beta_{r+1}^{-1}p_{r+1}$.

Combining~\eqref{eq:4.2} and~\eqref{eq:4.3} we obtain the inequality
\begin{equation} 
 \prod_{i=r+1}^k p_i \leq ( (k-r)^{1/2} \beta_{r+1}^{-1} p_{r+1}  )^{k-r},\qquad 0\leq r\leq k-1.
\label{eq:4.4}
\end{equation} 
Let $\log p_r = q_r$ and take the logarithm of~\eqref{eq:4.4} and also of~\eqref{eq:4.2} with $r = 0$, to obtain 
\begin{equation}
 \sum_{i=r+2}^k q_i \leq \frac{k-r}{2}\log(k-r) - (k-r)\log(\beta_{r+1}) + (k-r-1)q_{r+1},\qquad 1\leq r\leq k-2,
 \label{eq:4.7}
\end{equation} 
and
\begin{equation} 
 \sum_{i=1}^k q_i = \log\left|\det B\right|.
\label{eq:4.8}
\end{equation} 
We now divide the equation in~\eqref{eq:4.7} by $(k-r)(k-r-1)$ for $1\leq r\leq k-2$ 
and divide~\eqref{eq:4.8} by $k-1$, before adding them together. By observing that 
\[
 \frac{1}{r(r-1)}+\frac{1}{(r+1)r} + \cdots + \frac{1}{(k-1)(k-2)} + \frac{1}{k-1} = \frac{1}{r-1}, 
\]
we obtain the following inequality
\[
 \sum_{r=1}^{k-1}\frac{q_{r+1}}{k-r}+\frac{q_1}{k-1}\leq \frac{1}{2}\log\left(\prod_{r=2}^{k-1}r^{\tfrac{1}{r-1}}\right) + \frac{1}{k-1}\log\left|\det B\right| - \log\left(\prod_{r=1}^{k-2}\beta_{r+1}^{\tfrac{1}{k-r-1}}\right) + \sum_{r=1}^{k-2}\frac{q_{r+1}}{k-r}. 
\]
Defining $f(s) = \left(\prod_{r=2}^{s}r^{\tfrac{1}{r-1}}\right)^{1/2}$ and canceling terms, we arrive at
\[
 q_k + \frac{q_1}{k-1} \leq \log f( k-1 ) + \frac{1}{k-1}\log\left|\det B\right| - \log\left(\prod_{r=1}^{k-2}\beta_{r+1}^{\tfrac{1}{k-r-1}}\right). 
\]
Using~\eqref{eq:4.3} for $\left|\det B\right|$ we conclude that 
\[
 q_k + \frac{q_1}{k-1} \leq \log f( k-1 ) + \frac{k}{2(k-1)}\log k -\frac{k}{k-1}\log\beta_1 + \frac{kq_1}{k-1} - \log\left(\prod_{r=1}^{k-2}\beta_{r+1}^{\tfrac{1}{k-r-1}}\right).
\]
In other words, $q_k-q_1$ can be bounded above by 
\[
 q_k - q_1 \leq \log f( k ) + \frac{1}{2}\log k - 2\log\beta_1 - \log\left(\prod_{r=1}^{k-2}\beta_{r+1}^{\tfrac{1}{k-r-1}}\right).
\]
Finally, using the relation $q_k = \log p_k$ and the fact that $p_k/p_1 \geq \beta_{k}^{-1}\rho_{k-1}(A)$ we have 
\[
\beta_{k}^{-1}\rho_{k-1}(A)\leq \frac{p_k}{p_1}\leq \left(\beta_1^2\prod_{r=2}^{k-1}\beta_{r}^{\tfrac{1}{k-r}}\right)^{-1} \sqrt{k} f(k).
\] 
To obtain a bound on $\rho_k(A)$ we note that $\rho_k(A)\leq (1+\beta_k^{-2}) \rho_{k-1}(A)$ since the $k$th pivot has 
quality $\beta_k$. Therefore, we have
\[
 \rho_k(A) \leq (\beta_k+\beta_k^{-1})\left(\beta_1^2\prod_{r=2}^{k-1}\beta_{r}^{\tfrac{1}{k-r}}\right)^{-1} \sqrt{k} f(k),
\]
and the result follows by noting that $f(k) \leq 2k^{\tfrac{1}{4}\log(k)}$~\cite[eq.~(4)]{Foster_97_01}. 
\end{proof} 

The proof of Theorem~\ref{thm:GEmistakes} closely follows Wilkinson's original analysis of the growth factor for GE 
with complete pivoting~\cite{Wilkinson_61_01}. There are four generalizations: 
(1) The analysis allows for pivoting mistakes, (2) The intermediate 
growth factors are bounded, (3) The analysis allows for rectangular matrices, 
and (4) The proof does not require that $m$ and $n$ are finite, though $k$ must be, allowing the 
analysis to also apply to the approximation of functions.\footnote{Theorem~\ref{thm:GEmistakes} shows that when GE
is applied to a function of two variables~\cite{Geddes_08_01,Townsend_13_02,Townsend_13_01}, then it is reasonable 
to select a pivot as a large function value, as opposed to the absolute maximum of the function.  For example, one can select the pivot as the 
largest value on a sufficiently dense sampling grid, as employed in~\cite{Townsend_13_01}.}  If $m=n$ is finite, $k = n$, and $\beta_r = 1$ for $1\leq r\leq n$, then 
the growth factor bound in~\cite{Wilkinson_61_01} is recovered, up to a factor of $2$. 
 
The bound on $\rho_k(A)$ in Theorem~\ref{thm:GEmistakes} reveals a correcting  
phenomenon. Naively, one would expect that it is possible for pivoting mistakes to 
accumulate in a multiplicative fashion; however, Theorem~\ref{thm:GEmistakes}
reveals that this is not possible. For example, the bound on $\rho_k(A)$ 
depends on the quality of the second GE pivot by the factor $\beta_2^{-1/(k-2)}$, and 
as $k$ increases the influence of $\beta_2$ on the intermediate growth factor diminishes. 
It is as if GE can use later steps to slowly correct earlier pivoting mistakes. 

Figure~\ref{fig:GEselfCorrects2} shows the intermediate growth factors when 
GE with complete pivoting is applied to a randomly 
generated\footnote{The matrix is generated using the MATLAB code: {\tt rng(7); A = randn(100)}.}  $100\times 100$ 
matrix, where pivoting mistakes are not allowed.   
Figure~\ref{fig:GEselfCorrects} shows the intermediate growth factors when 
GE with complete pivoting is applied to the same matrix, where pivoting mistakes 
are allowed every ten GE steps. For the experiments a pivot of quality $0<\beta_k\leq 1$ is selected 
by sorting all the possible candidate entries and picking a pivot that has the smallest quality that is 
$\geq \beta_k$. Ties are broken arbitrarily. Two examples below further illustrate the bound in Theorem~\ref{thm:GEmistakes}. 
\begin{figure} 
\centering
 \begin{overpic}[width=.9\textwidth, height=.25\textwidth]{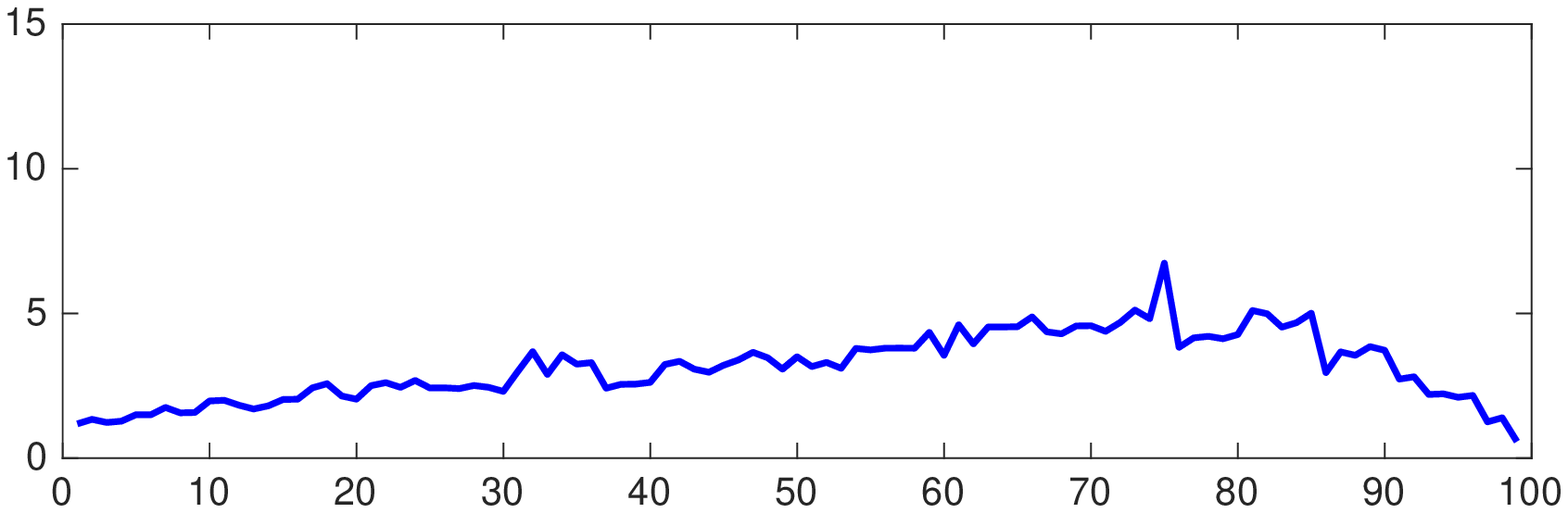}
 \put(20,22) {GE with complete pivoting, no pivoting mistakes}
 \put(2,14) {$\rho_k(A)$}
 \put(94,13) {$\rho(A)$}
 \put(47,-2) {GE step}
 \linethickness{0.5pt}
 \put(11,13.3) {\line(2,0){82}}
 \end{overpic}
 \caption{The intermediate growth factors of GE with complete pivoting when applied to a randomly generated 
 $100\times 100$ matrix.}
\label{fig:GEselfCorrects2}
 \end{figure}
 
\begin{figure} 
\centering
 \begin{overpic}[width=.9\textwidth, height=.25\textwidth]{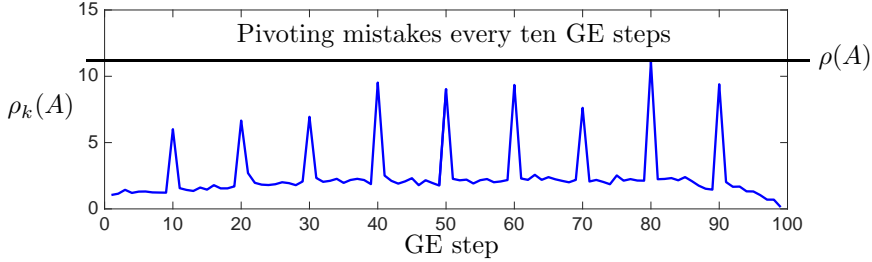}
 \put(28,22) {Pivoting mistakes every ten GE steps}
 \put(2,14) {$\rho_k(A)$}
 \put(94,19.5) {$\rho(A)$}
 \put(47,-2) {GE step}
 \linethickness{0.5pt}
 \put(11,19.9) {\line(2,0){82}}
 \end{overpic}
 \caption{GE with complete pivoting corrects pivoting mistakes. Here, we perform GE with complete pivoting on a randomly generated 
 $100\times 100$ matrix, where every ten steps a pivoting mistake is made, i.e., $\beta_{10k} = 1/10$ for $1\leq k\leq 9$. 
 One can see that the intermediate growth rates quickly 
 recover after a mistake and they do not accumulate in a multiplicative fashion. This phenomenon is important for the success of GE in iterative mode
 as one can safely employ cheaper pivoting strategies that only find large pivots, as opposed to the largest pivot.}
\label{fig:GEselfCorrects}
 \end{figure}

\subsubsection*{Example 1}
Suppose that all the pivots have the same quality $0<\beta\leq 1$, i.e., $\beta_r = \beta$, for $1\leq r\leq k$. 
By Theorem~\ref{thm:GEmistakes}, the intermediate growth rate is bounded above by  
\[
\begin{aligned} 
 \rho_k(A) &\leq 2(\beta+\beta^{-1})\left(\beta\prod_{r=2}^{k-1}\beta^{\tfrac{1}{k-r}}\right)^{-1}\sqrt{k} k^{\tfrac{1}{4}\log k} \cr&\leq 4\left( \beta^{2 + \sum_{r=1}^{k-2} \tfrac{1}{r} } \right)^{-1}\sqrt{k} k^{\tfrac{1}{4}\log k} \cr &= 4\beta^{-2-H_{k-2}}\sqrt{k} k^{\tfrac{1}{4}\log k},
\end{aligned} 
\]
where $H_k$ is the $k$th harmonic number.  Since $H_k\sim \log k$ we have,
\[
  \rho_k(A) \lesssim k^{\tfrac{1}{2}-\log \beta+\tfrac{1}{4}\log k}. 
\]
For example, if $\beta = 1/100$ then the bound on the intermediate growth rate degrades compared 
to complete pivoting without mistakes by an asymptotic factor of only $k^{\log 100} = k^{4.60\ldots}$ as $k\rightarrow \infty$. 
This shows that GE is very robust to severe pivoting mistakes. 

\subsubsection*{Example 2}
To further show that pivoting mistakes are corrected by later GE steps, we take 
$\beta_{r} = \min(1/\log(k-r+2),1)$ for $1\leq r\leq k$. Here, the first few pivots are of 
poor quality, while the quality of later pivots is much better. By Theorem~\ref{thm:GEmistakes}, we have
\[
\begin{aligned}
  \rho_k(A) &\leq 2\left(\beta_1^2\prod_{r=1}^{k-1}\beta_{r}^{\tfrac{1}{k-r}}\right)^{-1}\sqrt{k}k^{\tfrac{1}{4}\log k} \cr
  &=2\log(k+2)^2\prod_{r=1}^{k-1} \log(r+2)^{1/r}\sqrt{k}k^{\tfrac{1}{4}\log k} \cr 
  &\lesssim \log(k)^2 k^{\log\log k}\sqrt{k}k^{\tfrac{1}{4}\log k}. 
\end{aligned}
\]
Therefore, even though the quality of early pivots is logarithmically poor the 
bound on the intermediate growth rate degrades by only a factor of $\log(k)^2 k^{\log\log k}$ as $k\rightarrow\infty$.  This 
example shows that GE is able to correct pivoting mistakes, and could lead to faster algorithms because
pivoting strategies can be far less stringent in the first few steps of GE.   

%
%

\section{Partial pivoting does not correct pivoting mistakes}
%
While GE with complete pivoting is robust to pivoting mistakes, GE with partial pivoting 
is not. 

Consider GE with partial pivoting, where the $k$th pivot is selected as the 
absolute maximum entry in column $k$. Here, we define the quality of the 
$k$th pivot as a value $0<\beta_k\leq 1$ defined by
\[
 \beta_k  = \left| A_{i_kk}^{(k-1)} \right|\bigg/\!\! \left( \max_{1\leq i\leq m} \left|A_{ik}^{(k-1)}\right|\right), \qquad  \max_{1\leq i\leq m} \left|A_{ik}^{(k-1)}\right|\neq 0,\qquad 1\leq k\leq \min(m,n).
\]
Pick $m = n$ and suppose that the quality of the pivots is $0<\beta_1,\ldots,\beta_{n}\leq 1$. 
The following two matrices are obtained by modifying Wilkinson's canonical matrix~\cite{Higham_89_01}:
\[
 A_1 = \begin{pmatrix} 
  1 & & & &1\\[3pt] 
  -\beta_1^{-1} & 1 & & &1 \\[3pt] 
  -\beta_1^{-1} & -\beta_2^{-1} & 1& &1 \\[3pt]
  \vdots & \vdots & \ddots &\ddots& \vdots \\[3pt] 
  -\beta_1^{-1} & -\beta_2^{-1} & \ldots &-\beta_{n-1}^{-1} & 1\\[3pt]
 \end{pmatrix}, \quad A_2 = \begin{pmatrix} 
  1 & & & &1\\[3pt] 
  -\gamma_1^{-1} & 1 & & &1 \\[3pt] 
  -\gamma_1^{-1} & -\gamma_2^{-1} & 1& &1 \\[3pt]
  \vdots & \vdots & \ddots &\ddots& \vdots \\[3pt] 
  -\gamma_1^{-1} & -\gamma_2^{-1} & \ldots &-\gamma_{n-1}^{-1} & 1\\[3pt]
 \end{pmatrix}, 
\]
where 
\[
\gamma_k^{-1} = \begin{cases} \beta_k^{-1}, & \beta_k\neq 1,\cr 0, & \beta_k = 1.\end{cases}
\]
Both $A_1$ and $A_2$ show that pivoting mistakes can accumulate for GE with partial pivoting. While $A_1$ 
gives the worst possible intermediate growth factors, the accumulation of pivoting mistakes is more obvious when 
considering $A_2$ (see Figure~\ref{fig:GEPP}).   For $A_1$ and $A_2$ the intermediate growth factors are given by 
\[
 \rho_k(A_1) = \prod_{r=1}^k \left( 1 + \beta_r^{-1}\right)\Bigg/ \left(\max_{1\leq r\leq n} \beta_r^{-1}\right),\quad 
\rho_k(A_2) = \prod_{r=1}^k \left( 1 + \gamma_r^{-1}\right)\Bigg/ \left(\max_{1\leq r\leq n} \gamma_r^{-1}\right).
\]
Figure~\ref{fig:GEPP} shows the intermediate growth factors when GE with partial pivoting is applied to $A_2$, where 
pivoting mistakes are allowed every ten GE steps. 

\begin{figure} 
\centering
 \begin{overpic}[width=.9\textwidth, height=.25\textwidth]{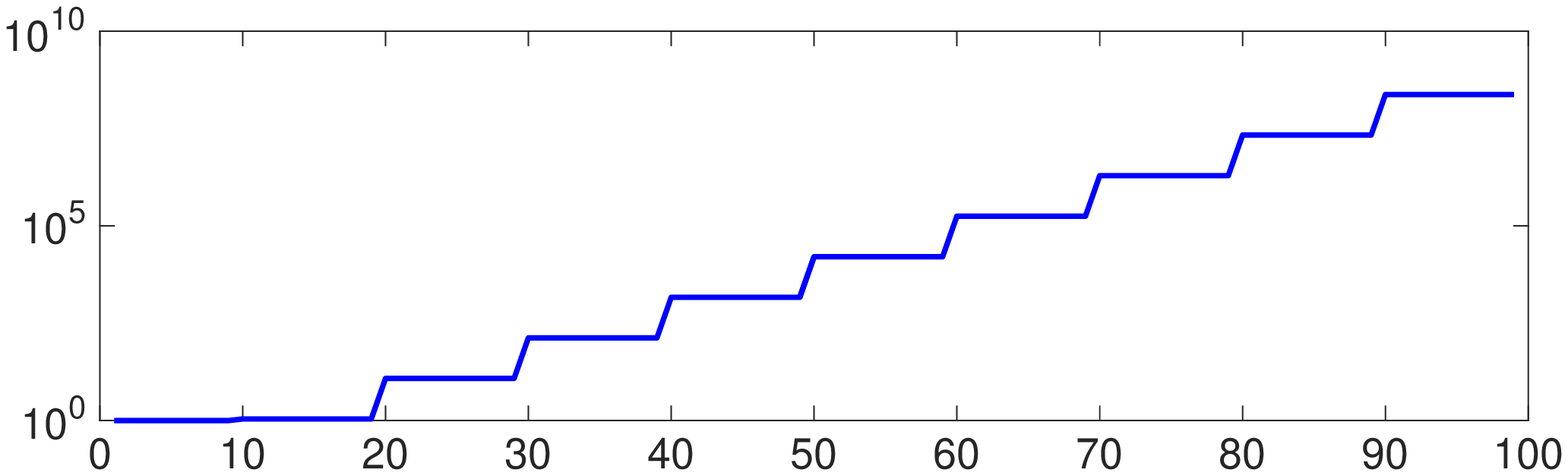}
 \put(28,23) {Pivoting mistakes every ten GE steps}
 \put(0,13) {$\rho_k(A_2)$}
 \put(94,22) {$\rho(A_2)$}
 \put(45,-2) {GE step}
 \linethickness{0.5pt}
 \put(11,22.1) {\line(2,0){82}}
 \end{overpic}
 \caption{GE with partial pivoting does not correct pivoting mistakes. Here, we perform GE with partial pivoting on 
 the matrix $A_2$, where a pivoting mistake is incurred every ten steps, i.e., $\beta_{10k} = 1/10$ for $1\leq k\leq 9$. 
 The pivoting mistakes accumulate in the intermediate growth factors in a multiplicative fashion.}
\label{fig:GEPP}
 \end{figure}
 
GE with partial pivoting is not used in iterative mode because it does not adequately 
construct near-best low rank approximations~\cite{Foster_06_01}. It may be that robustness 
to pivoting mistakes and rank-revealing properties of a pivoting strategy are somehow intimately 
connected.  

\section*{Acknowledgments}
I thank Grady Wright for carefully reading a draft version of the paper, and
Akil Narayan for discussing the topic with me during a productive visit to the 
University of Massachusetts Dartmouth.  I also benefited from comments from 
Mikael Slevinsky and Marcus Webb.


\begin{thebibliography}{3}
\bibitem{Bebendorf_00_01} {\sc M. Bebendorf}, {\em Approximation of boundary element matrices}, {Numer. Math.}, 86 (2000), pp.~565-589.
\bibitem{Bebendorf_08_01} {\sc M. Bebendorf}, {\em Hierarchical Matrices: A Means to Efficiently Solve Elliptic Boundary Value Problems}, {Springer}, 2008. 
\bibitem{Driscoll_07_01} {\sc T. A. Driscoll and Kara L. Maki}, {\em Searching for rare growth factors using multicanonical Monte Carlo methods}, {SIAM review}, 49 (2007), pp.~673--692.
\bibitem{Geddes_08_01} {\sc O. A. Carvajal, F. W. Chapman, and K. O. Geddes}, {\em Hybrid symbolic-numeric integration in multiple dimensions via tensor-product series}, {Proc. 2005 International Symposium on Symbolic and algebraic computation}, {ACM}, 2005.
\bibitem{Foster_97_01} {\sc L. V. Foster}, {\em The growth factor and efficiency of Gaussian elimination with rook pivoting}, 
{J. Comput. Appl. Math.}, 86 (1997), pp.~177--194. 
\bibitem{Foster_06_01} {\sc L. V. Foster and X. Liu}, {\em Comparison of rank revealing algorithms applied to matrices with well defined numerical ranks}, Manuscript, (2006).
\bibitem{Golub_12_01} {\sc G. H. Golub and C. F. Van Loan}, {Matrix Computations}, Third edition, JHU Press, 2012.
\bibitem{Gu_96_01} {\sc M. Gu and S. C. Eisenstat}, {\em Efficient algorithms for computing a strong rank-revealing QR factorization}, {SIAM J. Sci. Comput.}, 17 (1996), pp.~848--869.
\bibitem{Guttman_44_01} {\sc L. Guttman}, {\em General theory and methods for matric factoring}, {Psychometrika}, 9 (1944), pp.~1--16.
\bibitem{Higham_89_01} {\sc N. J. Higham and D. J. Higham}, {\em Large growth factors in Gaussian elimination with pivoting}, {SIAM J. Mat. Anal. Appl.}, 10 (1989), pp.~155--164.
\bibitem{Higham_02_01} {\sc N. J. Higham}, {\em Accuracy and Stability of Numerical Algorithms}, SIAM, 2002.
\bibitem{Halko_11_01} {\sc N. Halko, P.-G. Martinsson, and J. A. Tropp}, {\em Finding structure with randomness: Probabilistic algorithms for constructing approximate matrix decompositions}, {SIAM Review}, 53 (2011), pp.~217--288.
\bibitem{Pan_00_01} {\sc C.-T. Pan}, {\em On the existence and computation of rank-revealing LU factorizations}, {Lin. Alg. Appl.}, 316 (2000), pp.~199--222.
\bibitem{Townsend_13_02} {\sc A. Townsend and L. N. Trefethen}, {\em Gaussian elimination as an iterative algorithm}, SIAM News, March 2013.  
\bibitem{Townsend_13_01} {\sc A. Townsend and L. N. Trefethen}, {\em An extension of Chebfun to two dimensions}, {SIAM J. Sci. Comput.}, 35 (2013), C495--C518.
\bibitem{Trefethen_90_01} {\sc L. N. Trefethen and R. S. Schreiber}, {\em Average-case stability of Gaussian elimination}, {SIAM J. Mat. Anal. Appl.}, 11 (1990), pp.~335--360.
\bibitem{Wilkinson_61_01} {\sc J. H. Wilkinson}, {\em Error analysis of direct 
 methods of matrix inversion}, {J. ACM}, 8 (1961), pp.~281--330.
\end{thebibliography}
\end{document}